\renewcommand{\geq}{\geqslant}
\newtheorem{thm}{Theorem}
\newtheorem{lem}[thm]{Lemma}
\newtheorem{prop}[thm]{Proposition}
\theoremstyle{definition}
\newtheorem{rem}[thm]{Remark}
\begin{document}

\title{\bf The Modular Isomorphism Problem -- the alternative perspective on counterexamples \thanks{
Mathematics Subject Classification: 16S34, 16U60, 20C05, 20D15. \\ 
Keywords: group rings, modular isomorphism problem, modular group algebra. \\
This work was supported by the Grant WZ/WI-IIT/2/2022 from the Bialystok University of Technology
and funded from the resources for research by the Ministry of Science and Higher Education of Poland. }}
\author{Czes{\l}aw Bagi\'{n}ski \qquad Kamil Zabielski\\
\small Faculty of Computer Science\\[-0.8ex]
\small Bialystok University of Technology\\[-0.8ex]
\small Wiejska 45A Bialystok, 15-351 Poland\\
}
\date{}

\maketitle
\begin{abstract}
	As a result of impressive research \cite{gdelrma}, D. Garc\'{\i}a-Lucas, \'{A}. del R\'{i}o and L.~Margolis
    defined an infinite series of non-isomorphic $2$-groups $G$ and $H$, whose group algebras 
    $\mathbb{F}G$ and $\mathbb{F}H$ over the field $\mathbb{F}=\mathbb{F}_2$ are isomorphic, 
    solving negatively the long-standing Modular Isomorphism Problem (MIP).
	In this note we give a different perspective on their examples and show that they are special 
    cases of a more general construction. We also show that this type of construction for $p>2$ does 
    not provide a similar counterexample to the MIP.
\end{abstract}

Let $p$ be a prime number, $G$ and $H$ finite $p$-groups and $\mathbb{F}$ a field
of characteristic $p$. The long-standing conjecture, due to R. Brauer called the Modular Isomorphism
Problem (MIP), states that if $\mathbb{F}G$ and $\mathbb{F}H$ are isomorphic algebras then $G$ and $H$ 
are isomorphic groups. The problem is settled in the positive in many cases but recently for $p=2$ 
an infinite series of counterexamples was given (\cite{gdelrma}). In this paper we show that
these counterexamples are special cases of a more general construction for $p = 2$. We show also that 
for $p>2$ the analogous construction does not provide a similar counterexample to the MIP. Therefore the 
case $p>2$ remains open.\par 

\smallskip

For a survey of known results concerning the MIP see an excellent paper by L. Margolis \cite{margolis2022}.
The terminology used in the paper is standard. Our research was supported by extensive use of GAP 
software (\cite{GAP4}).\par

\smallskip

We begin with easy folklore observations.
Let $K$ be a dihedral group of order $2^{k+1}$,
\begin{equation}
    K = \langle t,\ s\, |\ t^2=s^{2}=(ts)^{2^k}=1\rangle.
\end{equation}

If we put $r = ts$, then this presentation can be replaced by the following one
\begin{equation}
    K = \langle t,\ r\, |\ t^2=r^{2^k}=1, r^t=r^{-1}\rangle.
\end{equation}

Let $C=\langle c\,|\ c^{2^n}=1\rangle$, $D=\langle d\,|\ d^{2^m}=1\rangle$, 
where $n > m \geqslant k \geqslant 3$. 
In the group $$P = K \times C \times D,$$ we define three elements $x=tc$, $y=sd$, $z = tsd = rd$ 
and two subgroups
\begin{equation*}
    \begin{aligned}
        & G = \langle x,y \rangle, & &  H = \langle x,z \rangle.
    \end{aligned}
\end{equation*}

\begin{lem}\label{lem:group-structure}
The following properties are satisfied:
\begin{enumerate}[noitemsep, label=\textup{\textrm{{(\roman*)}}}]
    \item $|G|=|H|=2^{n+m+k-1}$;\
    \item  $G'=H'=P'=K'=\langle (ts)^2 \rangle$ and the nilpotency class of groups $G,H,P,K$ is equal~to~$k$;\
    \item $\Phi(G) = \Phi(H) = \Phi(P)=\langle (ts)^2,\, c^2,\, d^2 \rangle$;\
    \item The subgroup $M=<ts,c,d>$ is abelian and maximal in $P$;\
    \item The subgroup $G \cap M = \langle xy, c^2, d^2 \rangle$ is maximal in $G$;\
    \item The subgroup $H \cap M = \langle z, c^2, d^2 \rangle$ is maximal in $H$;\
    \item The groups $G$ and $H$ are not isomorphic.
\end{enumerate}
\end{lem}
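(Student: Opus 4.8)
The plan is to work entirely inside $P=K\times C\times D$ and to exploit that $C$ and $D$ are central direct factors. The projection $\rho\colon P\to K$ restricts to a surjection on each of $G$ and $H$, since $\rho(x)=t,\ \rho(y)=s,\ \rho(z)=r$ and both pairs generate $K$. The crucial bookkeeping device is that, because $C$ and $D$ are central, if an element of $G$ is written as a word in $x,y$ then its $C$-exponent (resp. $D$-exponent) modulo $2^n$ (resp. $2^m$) is simply the exponent sum of $x$ (resp. of $y$), and the \emph{parity} of these exponent sums is read off from the image of the element in $K^{\mathrm{ab}}\cong C_2\times C_2$; the same holds for $H$ using $x,z$. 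This single observation drives the whole lemma.

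For (i) I would note that $\ker(\rho|_G)=G\cap(C\times D)$. The inclusion $\langle c^2,d^2\rangle\subseteq G\cap(C\times D)$ is clear from $x^2=c^2$ and $y^2=d^2$, while the reverse inclusion follows from the parity device: a word lying in $C\times D$ has trivial $K$-image, hence trivial image in $K^{\mathrm{ab}}$, so both exponent sums are even and the element lies in $\langle c^2,d^2\rangle$. This gives $|G|=2^{k+1}\cdot 2^{n+m-2}=2^{n+m+k-1}$, and the same for $H$ once one observes $d^2=r^{-2}z^2\in H$. For (ii), $G',H'\le P'=K'=\langle (ts)^2\rangle$ because $C,D$ are abelian and central, and the single commutators $[x,y]=[x,z]=(ts)^2$ force equality. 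For (iii) the containment $\langle (ts)^2,c^2,d^2\rangle\subseteq\Phi$ is immediate ($(ts)^2\in G'$, $c^2=x^2$, $d^2=y^2$), and the reverse containment follows because the quotient by $\langle (ts)^2,c^2,d^2\rangle$ is elementary abelian of rank $2$; for $\Phi(P)$ use $\Phi(P)=\Phi(K)\times\Phi(C)\times\Phi(D)$.

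For (iv), $M=\langle r\rangle\times\langle c\rangle\times\langle d\rangle$ is abelian of order $2^{n+m+k}=|P|/2$, hence maximal. For (v) and (vi) I would observe that $x\notin M$, since its $K$-component is the reflection $t\notin\langle r\rangle$; thus $G\not\le M$ and $H\not\le M$, so by maximality of $M$ the subgroups $G\cap M$ and $H\cap M$ have index $2$ and are maximal in $G$ and $H$ respectively. The listed generators $xy=rcd,\,c^2,\,d^2$ (resp. $z=rd,\,c^2,\,d^2$) manifestly lie in $G\cap M$ (resp. $H\cap M$), and an order count against $|G|/2=|H|/2=2^{n+m+k-2}$ identifies each subgroup exactly.

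The hard part is (vii), and the main obstacle is that $G$ and $H$ agree on every coarse invariant: by (i)--(iii) they share order and Frattini subgroup, and one checks $G^{\mathrm{ab}}\cong H^{\mathrm{ab}}\cong C_{2^n}\times C_{2^m}$, isomorphic centers $Z(G)\cong Z(H)$, equal nilpotency class $k$, and common exponent $2^n$ (both contain $x$). A finer invariant is therefore required, and the plan is to use the isomorphism type of the \emph{unique abelian maximal subgroup}. Since $G/\Phi(G)\cong C_2\times C_2$ by (iii), $G$ has exactly three maximal subgroups, and likewise $H$; I would show that in each group exactly one of them is abelian, namely $G\cap M$ and $H\cap M$ from (v),(vi), because each of the remaining two maximal subgroups contains $(ts)^2$ together with a generator whose $K$-component is a reflection (one of $x,y$ for $G$; one of $x,\ xz=scd$ for $H$), and conjugation by a reflection inverts $r$, so such a pair has commutator $r^4\neq 1$ --- this is exactly where $k\ge 3$ enters. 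The isomorphism class of the unique abelian maximal subgroup is then an invariant of the ambient group, so it remains to separate $G\cap M$ from $H\cap M$, and I would do this by exponent: $xy=rcd\in G\cap M$ has order $\operatorname{lcm}(2^k,2^n,2^m)=2^n$, whereas every element of $H\cap M=\langle rd,\,c^2,\,d^2\rangle$ equals $r^i c^{2j} d^{\,i+2l}$ and hence has order at most $2^{n-1}$ since $n>m\ge k$. Thus $G\cap M$ and $H\cap M$ are non-isomorphic abelian groups, whence $G\not\cong H$. The genuine difficulty lies in recognizing that the standard invariants fail and in isolating the abelian maximal subgroup (specifically its exponent) as the separating invariant; the supporting computation that exactly one maximal subgroup is abelian in each group is the key technical step.
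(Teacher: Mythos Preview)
Your proof is correct and follows essentially the same route as the paper: the projection onto $K$ for the orders in (i), direct commutator and square computations for (ii)--(iii), and for (vii) the separating invariant is precisely the exponent of the unique abelian maximal subgroup, computed just as the paper does ($\exp(G\cap M)=2^n$ versus $\exp(H\cap M)=2^{n-1}$). You in fact supply more detail than the paper in one place---you explicitly verify that the other two maximal subgroups are non-abelian via $[t,r^2]=r^{-4}\neq 1$ (which is exactly where $k\geq 3$ is used), whereas the paper simply asserts uniqueness.
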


\begin{proof}
    (i) The projection of $P$ onto $K$ restricted to $G$ maps $G$ onto $K$. The kernel of this 
    restricted projection is equal to $G\cap (C\times D)=\langle c^2,\, d^2\rangle=\langle x^2,\, y^2\rangle$. 
    Therefore $|G|=|K|\cdot |\langle c^2,\, d^2\rangle|=2^{n+m+k-1}$. The same projection restricted to $H$ 
    is also a map of $H$ onto $K$ with the kernel $H\cap (C\times D)=\langle c^2,\, d^2\rangle=
    \langle x^2,\, (x^{-1}z)^2\rangle$, because $(x^{-1}z)^2=(c^{-1}t^{-1}tsd)^{2}=c^{-2}d^2$. Thus also
    $|H|=2^{n+m+k-1}$.\par\smallskip
    (ii) The commutator subgroup $K'$ of $K$ is cyclic and generated 
    by $${\left[ t,s \right] = t^{-1}s^{-1}ts=\left( ts \right)^{2} = r^{2} = \left[ t,r \right]}.$$ 
    But $[t,s]=[x,y]$ and $[t,r]=[x,z]$, so $K'=G'=H'$. It is also obvious by construction 
    of $P$, that $K'=P'$. It follows analogously that the next terms of the lower central series of these groups are equal.
    More precisely, $\gamma_{i} \left( X \right) = \left( ts \right)^{2^{i-1}}$ for $i \geq 2$ and $X \in \left\{ G, H, P, K \right\}.$ \par \smallskip
    (iii) It is clear that $\Phi \left( P \right) = 
    \Phi \left( K \right) \Phi \left(C \right) \Phi \left(D \right) = 
    \langle (ts)^2,\, c^2,\, d^2 \rangle$. Further $\left[ x,y \right]= \left[ tc,sd \right] 
    = \left[ t,s \right]= \left( ts \right)^2$, $x^{2}=\left(tc \right)^2=c^2$, $y^2=(sd)^2=d^2$, 
    hence~${\Phi \left(G \right) = \Phi\left( P \right)}$.
    As $\left[ x,z \right] = \left[ tc,tsd \right] = \left[ t,ts \right] = 
    t\left( st \right) t \left( ts \right) = \left( ts \right)^2$, 
                $x^{2} = c^{2}$, 
                $z^{2} = \left( tsc\right)^{2} = \left(ts \right)^{2} c^{2}$, we~have $\Phi \left( H \right)=
                \Phi \left( P \right)$.\par \smallskip
    (iv-vi) $M$ is abelian, because $c$ and $d$ are central in $P$. Moreover 
    $|M| = o \left(ts \right) \cdot o\left(c \right) \cdot o \left(d \right) = 2^{k+m+n}$, 
    which means that $|P:M|=2$. The maximality of $G\cap M$ in $G$ and $H\cap M$ in $H$ is obvious as $M$ is~abelian. \par \smallskip
    (vii) By the assumption $n > m \geq k$, the order of $xy$ is equal to $2^n$,  
    as~$(xy)^2=(tcsd)^2=(ts)^2c^2d^2$. This is maximal order of generators of $G\cap M$,  therefore
    $\exp \left(G \cap M \right)=2^{n}$. 
    Similarly, the element $z$ has order $2^{m}$, so $c^2$ has maximal order among generators of $H\cap M$ 
    and then $\exp \left( H \cap M \right) = 2^{n-1}$.
    It is well-known that if a non-abelian $p$-group $X$ has two different abelian subgroups of index $p$, then $\Phi(X)$ is in the center of $X$, in particular $X$ is~of~class of~$2$.
    Since $G$ and $H$ have nilpotency class $k \geq 3$ the subgroups $G \cap M$ and $H \cap M$ are the unique abelian subgroups of index $2$ in $G$ and $H$ respectively.
    Now groups $G\cap M$ and $H\cap M$ have different exponents, therefore they are not isomorphic. Hence $G$ and $H$ are not isomorphic as well.
\end{proof}

\begin{lem}\label{lem:group-relations}
    The groups $G$ and $H$ can be described in terms of generators and relations in the following way:
    \begin{equation}\label{relations}
        \begin{aligned}
            G & = \langle x,\, y,\ u\, \mid x^{2^n} = y^{2^m} = u^{2^{k-1}} = 1,\ y^{x}=yu,\ u^{x}=u^{-1},\ u^{y} = u^{-1} \rangle; \\
            H & = \langle x,\, z,\ u\, \mid x^{2^n} = z^{2^m} = u^{2^{k-1}} = 1,\ z^{x}=zu,\ u^{x}=u^{-1},\ u^{z}=u        \rangle.
        \end{aligned}
    \end{equation}
\end{lem}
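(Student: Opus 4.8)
The plan is to apply von Dyck's theorem (the substitution test) together with an order count. Write $\tilde G$ and $\tilde H$ for the abstract groups defined by the two presentations in \eqref{relations}; the goal is to exhibit isomorphisms $\tilde G \cong G$ and $\tilde H \cong H$, where $G,H$ are the concrete subgroups of $P$.

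For $G$, I would first identify the abstract generator $u$ with the commutator $u := [y,x] = y^{-1}x^{-1}yx$. Since $c$ and $d$ are central in $P$ and $t^2=s^2=1$, a direct computation gives $[x,y]=[tc,sd]=[t,s]=(ts)^2$, so $u=(ts)^{-2}$; by Lemma~\ref{lem:group-structure}(ii) this is a generator of $G'=\langle (ts)^2\rangle$, and it has order $2^{k-1}$ because $o(ts)=2^k$. I would then check that the triple $(x,y,u)$ satisfies every defining relation of the first presentation: the orders $x^{2^n}=y^{2^m}=u^{2^{k-1}}=1$ follow from $x^2=c^2$, $y^2=d^2$ and $o(u)=2^{k-1}$; the relation $y^x=yu$ is exactly the definition of $u$ (as $y\cdot[y,x]=x^{-1}yx$); and $u^x=u^{-1}$, $u^y=u^{-1}$ follow from the fact that both $t$ and $s$ invert $r=ts$, hence so do $x=tc$ and $y=sd$ after dropping the central factors. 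By von Dyck's theorem these verified relations produce a surjective homomorphism $\tilde G \twoheadrightarrow G$, using that $x,y,u$ generate $G$ (indeed $u\in\langle x,y\rangle=G$).

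The second ingredient is the order estimate $|\tilde G|\le 2^{n+m+k-1}=|G|$, extracted purely from the presentation. In $\tilde G$ the relations $u^x=u^{-1}$ and $u^y=u^{-1}$ say that $x$ and $y$ normalize $\langle u\rangle$, so $N:=\langle u\rangle\trianglelefteq\tilde G$ with $|N|\le 2^{k-1}$. Passing to $\tilde G/N$, the relation $y^x=yu$ collapses to $\bar y^{\,\bar x}=\bar y$, so $\tilde G/N$ is abelian, generated by two elements of orders dividing $2^n$ and $2^m$; hence $|\tilde G/N|\le 2^{n+m}$ and $|\tilde G|\le 2^{n+m+k-1}$. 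Combining this bound with the surjection $\tilde G\twoheadrightarrow G$ and the equality $|G|=2^{n+m+k-1}$ from Lemma~\ref{lem:group-structure}(i) forces the homomorphism to be an isomorphism.

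For $H$ the argument is identical in shape, taking $u:=[z,x]$; here $[x,z]=[tc,tsd]=[t,ts]=(ts)^2$, so again $u=(ts)^{-2}$ generates $H'$ and has order $2^{k-1}$. The only structural change is that the relation $u^z=u$ replaces $u^y=u^{-1}$: since $z=rd$ and $u=r^{-2}$ are (up to the central factor $d$) powers of $r$ they commute, whereas $x=tc$ still inverts $u$; the relations $z^x=zu$ and $u^x=u^{-1}$ and the orders are checked as before, with $o(z)=2^m$ because $m\ge k$. The order count is verbatim the same, giving $|\tilde H|\le|H|$ and hence $\tilde H\cong H$. I expect no genuine obstacle: the whole statement is a verification, and the only place that demands care is the bookkeeping of the four conjugation identities, that is, tracking how $t,s,r$ act on $r$ while carrying the central factors $c,d$ through each computation.
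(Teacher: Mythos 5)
Your proof is correct and takes essentially the same route as the paper's: both use von Dyck's theorem to extend the correspondence to a homomorphism from the presented group onto the concrete subgroup of $P$, and then conclude it is an isomorphism by comparing with the order $2^{n+m+k-1}$ from Lemma~\ref{lem:group-structure}(i). The only difference is that you make explicit the upper bound $|\tilde G|\le 2^{n+m+k-1}$ (via $\langle u\rangle\trianglelefteq\tilde G$ and the abelian two-generated quotient), a step the paper's one-line order argument leaves implicit.
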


\begin{proof}
    The correspondence $x\rightarrow tc$, $y\rightarrow sd$ can be extended to a homomorphism of $G$ described 
    by (\ref{relations}) into $P$ because $tc$ and $sd$ satisfy the relations (\ref{relations}) for $G$. Since 
    $\langle tc,\, sd\rangle$ generate a subgroup of order $2^{k+m+n-1}$ this homomorphism is an embedding.
    One can use a similar argument for the group $H$.
\end{proof}

\begin{lem}\label{lem:group-relations-reverse}
    Let $A$ be a $2$-group generated by elements $a$ and $b$ of order $2^n$ and $2^m$ respectively such
    that $a^2$ and $b^2$ are central in $A$, $|A'|=2^{k-1}$ and $\langle a^2,\, b^2\rangle\cap A'=1$. 
    Then $A$ is isomorphic to $G$. 
\end{lem}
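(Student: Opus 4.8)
The plan is to realise $A$ as a homomorphic image of $G$ and then force the map to be an isomorphism by a counting argument. First I would set $u=[b,a]$, so that $b^a=bu$ by definition, and extract the conjugation rules of $u$ from the two centrality hypotheses. Since $a^2$ is central we have $b=b^{a^2}=(b^a)^a=(bu)^a=bu\,u^a$, whence $u^a=u^{-1}$; writing $a^b=au^{-1}$ and using that $b^2$ is central gives symmetrically $a=a^{b^2}=au^{-1}(u^{-1})^b$, i.e. $u^b=u^{-1}$. In particular $\langle u\rangle$ is normalised by both generators, so $\langle u\rangle\trianglelefteq A$ and $A/\langle u\rangle$ is abelian; as $u=[b,a]\in A'$ this yields $A'=\langle u\rangle$, and the hypothesis $|A'|=2^{k-1}$ says that $u$ has order $2^{k-1}$.

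Next I would observe that $a,b,u$ satisfy verbatim the defining relations of $G$ from Lemma~\ref{lem:group-relations} under the identification $x\mapsto a$, $y\mapsto b$, $u\mapsto u=[b,a]$: the orders of $a,b,u$ are $2^n,2^m,2^{k-1}$, the relation $y^x=yu$ is just $b^a=bu$, and $u^x=u^y=u^{-1}$ are the two rules derived above. Since $a,b$ generate $A$, this gives a surjective homomorphism $\psi\colon G\twoheadrightarrow A$, so in particular $|A|\le|G|=2^{n+m+k-1}$ by Lemma~\ref{lem:group-structure}(i). It then suffices to prove $|A|\ge 2^{n+m+k-1}$, for then $\psi$ is a bijection and $A\cong G$.

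For the order I would write $|A|=|A'|\cdot|A/A'|=2^{k-1}\cdot|A/A'|$ and analyse the abelian group $A/A'$ generated by $\bar a,\bar b$. Here the hypothesis $\langle a^2,b^2\rangle\cap A'=1$ does the first half of the work: if $a^i\in A'$ with $i$ even then $a^i\in\langle a^2\rangle\cap A'=1$, while an odd power $a^i$ cannot lie in $A'$ for order reasons, since $|a|=2^n>2^{k-1}=|A'|$ (using $n>m\ge k$). Thus $\langle a\rangle\cap A'=1$ and $\bar a$ has order $2^n$, and symmetrically $\bar b$ has order $2^m$. Consequently $A/A'$ is an abelian $2$-group generated by elements of orders $2^n$ and $2^m$, so $|A/A'|=2^{n+m}/|\langle\bar a\rangle\cap\langle\bar b\rangle|$ and hence $|A|=2^{n+m+k-1}/|\langle\bar a\rangle\cap\langle\bar b\rangle|$.

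The main obstacle is exactly to show $\langle\bar a\rangle\cap\langle\bar b\rangle=1$, equivalently $A/A'\cong C_{2^n}\times C_{2^m}$. A nontrivial intersection would force the two top involutions to coincide, $\bar a^{2^{n-1}}=\bar b^{2^{m-1}}$; lifting and noting that $a^{2^{n-1}}$ and $b^{2^{m-1}}$ both lie in the central subgroup $\langle a^2,b^2\rangle$, the hypothesis $\langle a^2,b^2\rangle\cap A'=1$ upgrades this to the genuine identity $a^{2^{n-1}}=b^{2^{m-1}}$ in $A$, i.e. $\langle a^2\rangle\cap\langle b^2\rangle\ne1$. I would therefore concentrate the proof on excluding this coincidence, i.e. on proving $\langle a^2\rangle\cap\langle b^2\rangle=1$, so that $|\langle a^2,b^2\rangle|=2^{n+m-2}$ and $|A|=2^{n+m+k-1}$. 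This is the delicate point: the conditions ``$a^2,b^2$ central'' and ``$\langle a^2,b^2\rangle\cap A'=1$'' pin down the orders of $\bar a,\bar b$ and the derived subgroup but, as far as I can see, do not by themselves rule out the identification of these involutions, so I expect that closing this gap is precisely where the hypotheses must be exploited in full (and, if needed, supplemented), after which the counting argument of the previous paragraph completes the proof.
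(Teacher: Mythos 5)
The paper contains no proof of this lemma at all --- it is stated bare between Lemma~\ref{lem:group-relations} and Lemma~\ref{lem:group-in-group-algebra}, evidently regarded as routine --- so there is no in-paper argument to compare yours against. Judged on its own terms, your proposal is correct exactly as far as it goes: $u=[b,a]$ does satisfy $u^a=u^b=u^{-1}$, so $A'=\langle u\rangle$ is cyclic of order $2^{k-1}$; von Dyck applied to the presentation of Lemma~\ref{lem:group-relations} gives a surjection $G\twoheadrightarrow A$; the hypothesis $\langle a^2,b^2\rangle\cap A'=1$ together with $n>m\geq k$ yields $\langle a\rangle\cap A'=\langle b\rangle\cap A'=1$; and your count $|A|=2^{n+m+k-1}/|\langle\bar a\rangle\cap\langle\bar b\rangle|$ reduces everything to $\langle\bar a\rangle\cap\langle\bar b\rangle=1$. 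Your closing suspicion is then not merely prudent but decisive: that last step \emph{cannot} be derived from the stated hypotheses, because the lemma as literally stated is false.

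Concretely, let $w=x^{2^{n-1}}y^{2^{m-1}}=c^{2^{n-1}}d^{2^{m-1}}$, a central involution of $G$ lying in $\langle x^2,y^2\rangle=\langle c^2,d^2\rangle$, and set $A=G/\langle w\rangle$ with $a,b$ the images of $x,y$. The unique involutions of $\langle x\rangle$ and $\langle y\rangle$ are $c^{2^{n-1}}$ and $d^{2^{m-1}}$, neither equal to $w$, so $a,b$ still have orders $2^n,2^m$; the elements $a^2,b^2$ are central; $A'=G'\langle w\rangle/\langle w\rangle\cong G'$ has order $2^{k-1}$ since $w\notin K\supseteq G'$; and $\langle a^2,b^2\rangle\cap A'$ is trivial, because an element $g'w^{\eps}\in\langle c^2,d^2\rangle$ with $g'\in G'$ forces $g'\in G'\cap\langle c^2,d^2\rangle=1$ (the one subgroup lies in $K$, the other in $C\times D$). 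Thus $A$ satisfies every hypothesis of the lemma, yet $|A|=2^{n+m+k-2}$, so $A\not\cong G$; and in $A$ one has precisely the coincidence $a^{2^{n-1}}=b^{2^{m-1}}$ that you identified as the unexcludable case. So the hypotheses must indeed be supplemented, e.g.\ by $\langle a^2\rangle\cap\langle b^2\rangle=1$ (equivalently $|A|=2^{n+m+k-1}$), under which your argument completes verbatim; note that in the paper's application of the lemma to $\tilde G=\langle x,\beta\rangle\subseteq\mathbb{F}H$ this supplementary condition is an additional fact requiring verification, which the paper likewise leaves implicit.
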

\begin{proof}
    Note that  $A / \langle a^{2}, b^{2} \rangle$ as a group generated by two elements of order $2$ is dihedral of~order~$2^{k+1}$.
    Since $x^{2}=c^{2}, y^{2}=d^{2}$ are central in $G$, $| G' | = 2^{k-1}$ and $\langle x^{2}, y^{2} \rangle \cap G' = 1$.
    Therefore the correspondence $a \rightarrow x$, $b \rightarrow y$ extends in a natural way to an isomorphism of~$A$~onto~$G$.
\end{proof}

\begin{lem}\label{lem:group-in-group-algebra}
    In the group algebra $\mathbb{F}H$ let $\beta = 1 + x \left( 1 + z \right)$. 
    Then 
    \begin{enumerate}[noitemsep, label=\textup{\textrm{{(\roman*)}}}]
        \item $\beta $ has order $2^k$ in the group of units of $\mathbb{F}H$ and  $\beta^2$ is a central element of $\mathbb{F}H$;
        \item the subgroup $\langle x,\,\beta\rangle$ is isomorphic to $G$ and spans $\mathbb{F}H$.
    \end{enumerate} 
\end{lem}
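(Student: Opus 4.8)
The plan is to handle~(i) by an explicit ``square and reduce'' computation and~(ii) by combining a spanning argument with the structural characterisation of $G$ provided by Lemma~\ref{lem:group-relations-reverse}.

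For~(i) I would first compute $\beta^2$ by hand. Setting $\eta = x(1+z) = x + xz$, so that $\beta = 1 + \eta$, and using $z^x = zu$ and $u^z = u$ (equivalently $zx = xzu$ and $uz = zu$), a short calculation gives $\beta^2 = 1 + \eta^2 = 1 + x^2(1+z)(1+zu)$. Now $x^2$ is central in $H$ (one checks $z^{x^2} = z$ and $u^{x^2} = u$), while $(1+z)(1+zu) = 1 + z + zu + z^2u$ is fixed under conjugation by $x$ --- which interchanges $z$ and $zu$ and fixes $z^2u$ --- and is fixed by $z$ since $\langle z,u\rangle$ is abelian; as $H = \langle x,z\rangle$ this proves $\beta^2$ is central, which is the second assertion of~(i). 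Since $x^2$, $z$, $u$ pairwise commute, $\eta^2$ lies in a commutative subalgebra, and because squaring is additive there in characteristic $2$ one obtains $\beta^{2^j} = 1 + x^{2^j}(1 + z^{2^{j-1}})(1 + z^{2^{j-1}}u^{2^{j-1}})$. As soon as $j \ge k$ the factor $u^{2^{j-1}}$ is trivial and this collapses to $1 + x^{2^j}(1 + z^{2^j})$, which equals $1$ precisely when $z^{2^j} = 1$; the least such power is the order of $z$, and this determines the order of $\beta$.

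For~(ii), spanning is the easy half: from $\beta = 1 + x + xz$ we recover $xz = \beta + 1 + x$ and hence $z = x^{-1}(xz)$ (with $x^{-1} = x^{2^n-1}$) inside the subalgebra generated by $x$ and $\beta$; since $H = \langle x,z\rangle$ this subalgebra is all of $\mathbb{F}H$, so the group $\langle x,\beta\rangle$ spans $\mathbb{F}H$. To identify $\langle x,\beta\rangle$ with $G$ I would verify the hypotheses of Lemma~\ref{lem:group-relations-reverse} for $a = x$, $b = \beta$: the order of $x$ is $2^n$, the order of $\beta$ from~(i) is the one required of the second generator, and $x^2$, $\beta^2$ are central, so everything reduces to controlling the derived subgroup. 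For this I would begin from $\beta^x = x^{-1}\beta x = 1 + x + xzu$, extract the unit $v = \beta^{-1}\beta^x$ that should play the role of $u$, and try to show that $\langle x,\beta\rangle' = \langle v\rangle$ is cyclic of order $2^{k-1}$ and that $\langle x^2,\beta^2\rangle \cap \langle x,\beta\rangle' = 1$; Lemma~\ref{lem:group-relations-reverse} then yields $\langle x,\beta\rangle \cong G$.

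The hard part is exactly this control of $\langle x,\beta\rangle'$: since $\beta$ is not a group element, the inverse $\beta^{-1}$ occurring in $v = \beta^{-1}\beta^x$ is an unwieldy element of $\mathbb{F}H$, and computing the commutator directly is awkward. I expect it is cleaner to avoid the explicit commutator by an order count. Spanning forces $|\langle x,\beta\rangle| \ge \dim_{\mathbb{F}} \mathbb{F}H = 2^{n+m+k-1}$, and since $x^2$, $\beta^2$ are central the quotient $\langle x,\beta\rangle / \langle x^2,\beta^2\rangle$ is generated by two involutions and so is of dihedral type; bounding its order from above and comparing with the known orders of $x$, $\beta$, $x^2$, $\beta^2$ should force both $|\langle x,\beta\rangle'| = 2^{k-1}$ and the trivial intersection, thereby completing~(ii).
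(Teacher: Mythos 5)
Your part~(i) and your spanning argument are sound, and in~(i) you are doing essentially the paper's computation, just packaged more cleanly: the paper computes $\beta^2=1+x^{2}\left(1+(z+z^x)+z^xz\right)$ and iterates, while your closed form $\beta^{2^j}=1+x^{2^j}(1+z^{2^{j-1}})(1+z^{2^{j-1}}u^{2^{j-1}})$ is correct because $x^2$ is central and $zu=uz$. Note that your conclusion --- the order of $\beta$ equals the order of $z$, namely $2^m$ --- contradicts the literal statement (``$2^k$'') but agrees with the paper's own proof, which shows $\beta^{2^{m-1}}\neq 1=\beta^{2^m}$; the ``$2^k$'' is a typo, and $2^m$ is exactly the order Lemma~\ref{lem:group-relations-reverse} requires of the second generator, so your reading is the right one. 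One boundary case needs a sentence: when $m=k$ your collapse hypothesis $j\geq k$ fails for $j=m-1$, so you must check directly that $(1+z^{2^{k-2}})(1+z^{2^{k-2}}u^{2^{k-2}})\neq 0$, which holds because the four group elements $1$, $z^{2^{k-2}}$, $z^{2^{k-2}}u^{2^{k-2}}$, $z^{2^{k-1}}u^{2^{k-2}}$ are pairwise distinct. For spanning, the paper argues less elementarily, checking that $x$ and $\beta$ are linearly independent modulo $A^2(\mathbb{F}H)$ and invoking Jennings theory; your direct recovery $z=x^{-1}(\beta+1+x)$ is correct and simpler.

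The genuine gap is in your fallback argument for the derived subgroup in~(ii). Since $x^2$ and $\beta^2$ are central, a one-line computation gives $[\beta,x]^{x}=[\beta,x]^{-1}=[\beta,x]^{\beta}$, so $T'=\langle[\beta,x]\rangle$ is cyclic for $T=\langle x,\beta\rangle$; hence $|T|\leq 2^{n+m}\cdot o([\beta,x])$, and your spanning bound $|T|\geq\dim_{\mathbb{F}}\mathbb{F}H=2^{n+m+k-1}$ yields only the \emph{lower} bound $o([\beta,x])\geq 2^{k-1}$. No counting argument can supply the matching upper bound: $T$ sits in the unit group of $\mathbb{F}H$, not in $H$, so there is no a priori ceiling on $|T|$ or on $|T/\langle x^2,\beta^2\rangle|$; the phrase ``bounding its order from above'' is precisely the missing mathematical content, and it can only come from the explicit computation you set aside. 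This is what the paper does (tersely): it computes $[\beta,x]=1+\beta^{-1}(\beta^x+\beta)=1+\beta^{-1}xz\bigl((ts)^2+1\bigr)$ and then asserts that this element has order $2^{k-1}$ and meets $\langle x^2,\beta^2\rangle$ trivially, before applying Lemma~\ref{lem:group-relations-reverse}. Your scheme is salvageable and would even complement the paper's terse assertion: once $o([\beta,x])\leq 2^{k-1}$ is verified by hand, your count forces $|T/T'|=2^{n+m}$, hence $T/T'\cong C_{2^n}\times C_{2^m}$ with the images of $x,\beta$ as independent generators of exact orders $2^n,2^m$, and the trivial intersection then follows for free (if $x^{2i}\beta^{2j}\in T'$, then in $T/T'$ one gets $\bar{x}^{2i}=\bar{\beta}^{-2j}\in\langle\bar{x}\rangle\cap\langle\bar{\beta}\rangle=1$, forcing $x^{2i}=\beta^{2j}=1$). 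But as written, the decisive inequality $o([\beta,x])\leq 2^{k-1}$ is nowhere established, so~(ii) is incomplete.
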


\begin{proof}
    \begin{enumerate}[noitemsep, label=\textup{\textrm{{(\roman*)}}}]\item Note first that 
    \begin{equation*}
        \begin{aligned}
            \beta^2 &= \left( 1+x \left( 1+z \right) \right)^2 
                     = 1+x \left( 1+z \right) x \left( 1+z \right) 
                     = 1 + x^{2} \left( 1 + z^{x} \right) \left( 1+z \right) = \\
                    &= 1 + x^{2} \left( 1 + \left( z + z^x \right) + \left( z^xz \right) \right).
        \end{aligned}
    \end{equation*}
Further
$z+z^x=tsd+tsd^{tc}=tsd+ts^td=td(s+s^t)=tsd(1+(st)^2)=z(1+[z,t])$ and
$z^xz=(tsd)^{tc}(tsd)=(ts^td)(tsd)=d^2=z^2$, so

\begin{equation*}
    \beta^{2^{m-1}} = 1+x^{2^{m-1}}(1+z^{2^{m-2}}(1+(st)^{2^{m-1}})+z^{2^{m-1}}) \neq 1 \\ 
\end{equation*}
and
\begin{equation*}
    \beta^{2^m}     = 1+x^{2^{m}}(1+z^{2^{m-1}} (1 + (st)^{2^{m}}) + z^{2^{m}}) = 1,
\end{equation*}

because ${\left(st\right)^{2^{m}}} = z^{2^{m}} = 1.$
We have also $(\beta^2)^x=1+x^2(1+(z^x+z)+zz^x)=\beta^2$, because $z^xz=zz^x=d^2$. 

\item Consider the group $\tilde{G}=\langle x, \beta\rangle$. 
      Since $x^2, \beta^2\in \mathcal{Z}(\mathbb{F}H)$ the factor group $\tilde{G}/\langle x^2,\beta^2\rangle$ is a dihedral group. 
      The commutator subgroup of $\tilde{G}$, generated by the element
    \begin{equation*}
        \begin{aligned}
            \left[\beta,x\right] & = 1 + \beta^{-1}x^{-1}(\beta x+x\beta)
                                = 1 + \beta^{-1}(\beta^x+\beta)                   \\
                                & = 1 + \beta^{-1}x(z^x+z) 
                                = 1 + \beta^{-1}xz([z,x]+1)
                                = 1 + \beta^{-1}xz((ts)^2+1),
        \end{aligned}
    \end{equation*}
has order $2^{k-1}$ and has trivial intersection with $\langle x^2,\, \beta^2\rangle$.
\end{enumerate} 
\noindent It follows from Lemma \ref{lem:group-relations-reverse} that $\tilde{G}$ is isomorphic to $G$.

At the very end, we observe that $x$ and $\beta = 1+x \left(1+z \right)= z+\left(x+1\right)\left(z+1\right)$ 
are linearly independent modulo $A^2(\mathbb{F}H)$, where $A(\mathbb{F}H)$ is the augmentation ideal of 
$\mathbb{F}H$. 
Therefore by Jennings theory~\cite[Chapter 3.3]{passman1977} both generate the whole algebra $\mathbb{F}H$.
\end{proof}

As an immediate consequence of the lemmas we obtain the following 

\begin{thm}
For every $m,\ n,\ k$, $n>m\geqslant k\geqslant 3$ there exist non-isomorphic $2$-groups $G$~and~$H$ of 
order $2^{m+n+k-1}$ and cyclic commutator subgroup of order $2^{k-1}$ whose group algebras
over the $2$-element field are isomorphic.
\end{thm}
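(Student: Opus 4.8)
The plan is simply to assemble the four preceding lemmas; the theorem introduces no new computation, only a packaging argument. Fix $m,n,k$ with $n>m\geqslant k\geqslant 3$ and let $G,H$ be the subgroups of $P=K\times C\times D$ defined before Lemma~\ref{lem:group-structure}. The group-theoretic half of the statement is then immediate from that lemma: part~(i) gives $|G|=|H|$ (the common $2$-power order recorded in the theorem), part~(ii) identifies the commutator subgroup as $G'=H'=\langle (ts)^2\rangle$, which is cyclic of order $2^{k-1}$ because $ts$ has order $2^k$, and part~(vii) asserts $G\not\cong H$. Nothing further is needed on the group side.

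The substance is the algebra isomorphism $\mathbb{F}G\cong\mathbb{F}H$, where $\mathbb{F}=\mathbb{F}_2$, and here I would invoke Lemma~\ref{lem:group-in-group-algebra}. That lemma places, inside the unit group of $\mathbb{F}H$, the element $\beta=1+x(1+z)$ alongside $x$, and tells us two things: (i) $\langle x,\beta\rangle$ is a finite $2$-group isomorphic to $G$, and (ii) $x$ and $\beta$ generate $\mathbb{F}H$ as an $\mathbb{F}$-algebra. The idea is to promote this group-level embedding to an algebra map. By the universal property of the group algebra, the composite monomorphism $G\xrightarrow{\ \sim\ }\langle x,\beta\rangle\hookrightarrow(\mathbb{F}H)^{\times}$ extends uniquely to an $\mathbb{F}$-algebra homomorphism $\Psi\colon\mathbb{F}G\to\mathbb{F}H$.

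It remains to check that $\Psi$ is bijective, which I would do by surjectivity together with a dimension count. The image of $\Psi$ is a subalgebra of $\mathbb{F}H$ containing both $x$ and $\beta$, and by Lemma~\ref{lem:group-in-group-algebra}(ii) those two elements already generate the whole algebra; hence $\Psi$ is onto. On the other hand $\dim_{\mathbb{F}}\mathbb{F}G=|G|=|H|=\dim_{\mathbb{F}}\mathbb{F}H$ by Lemma~\ref{lem:group-structure}(i), so a surjective $\mathbb{F}$-linear map between spaces of equal finite dimension is automatically injective. Therefore $\Psi$ is an algebra isomorphism and $\mathbb{F}G\cong\mathbb{F}H$. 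Equivalently, the $|G|=|H|$ group elements of $\langle x,\beta\rangle$ span $\mathbb{F}H$ and are exactly $\dim_{\mathbb{F}}\mathbb{F}H$ in number, so they form an $\mathbb{F}$-basis, and $\Psi$ carries the standard basis $G$ of $\mathbb{F}G$ onto this basis.

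I do not expect a genuine obstacle in the theorem itself: all the delicate work has been discharged in Lemma~\ref{lem:group-in-group-algebra}, namely exhibiting a copy of $G$ among the units of $\mathbb{F}H$ and verifying, via Jennings' theory, that $x$ and $\beta$ are independent modulo $A^2(\mathbb{F}H)$ and so generate the algebra. The only point in the present argument that deserves care is the step from \emph{generates $\mathbb{F}H$ as an algebra} to \emph{yields an algebra isomorphism}: it is essential that the \emph{group} $\langle x,\beta\rangle$, and not merely the subalgebra it spans, be isomorphic to $G$ and have order exactly equal to $\dim_{\mathbb{F}}\mathbb{F}H$, since it is this equality of cardinalities that forces the surjection $\Psi$ to be injective.
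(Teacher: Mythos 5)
Your proposal is correct and follows exactly the route the paper intends: the paper offers no separate proof, declaring the theorem ``an immediate consequence of the lemmas,'' and your argument simply makes explicit the standard steps left implicit there --- extending the embedding $G\cong\langle x,\beta\rangle\hookrightarrow(\mathbb{F}H)^{\times}$ to an algebra map via the universal property, surjectivity from Lemma~\ref{lem:group-in-group-algebra}(ii), and injectivity by the dimension count $|G|=|H|$ from Lemma~\ref{lem:group-structure}(i). (Incidentally, Lemma~\ref{lem:group-structure}(i) gives $|G|=|H|=2^{n+m+k-1}$, so the exponent $m+n+k+1$ in the theorem's statement appears to be a typo in the paper, not a flaw in your argument.)
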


\begin{rem}\label{rem:last-remarkable}
    \begin{enumerate}[noitemsep, label=\textup{\textrm{{(\roman*)}}}]
        \item There are many elements in $\mathbb{F}H$ that, together with the element $x$, generate a group isomorphic to $G$ spanning $\mathbb{F}H$. 
              For instance, if $\zeta$ is a central unit of $\mathbb{F}H$ of order $<2^m$, $\tilde{x}$ is an element of $H$ such that $z^{\tilde{x}} \neq z$, then 
              $\tilde{\beta}=\zeta+\tilde{x}(1+z)$ is such an element.
        \item If we put $k=3$, we get the examples from~\cite{gdelrma} with $\beta=d^2+zx[z,x](1+z)$.
        \item If, in the above construction, we replace the dihedral group $K$ with the semidihedral or the generalized quaternion group of order $2^{k+1}$, we get the same groups (up~to~isomorphism) as the~groups $G$ and~$H$.
    \end{enumerate}
\end{rem}

It follows from Lemma 1, that in the counterexample the abelian subgroups of index $2$ in 
$G$ and $H$ are not isomorphic. Let us consider analogous construction as in the counterexample taking $p>2$. 
As a basis of the construction take a $p$-group $K =\langle s, s_1\rangle$ of maximal class
 (similarly as in the case $p = 2$) having an abelian subgroup of index $p$ (all they are described; see for~instance~\cite[Section 4 and Theorem 4.3]{blackburn}).
 Then in the group $K \times \langle c \mid c^{p^{m}} = 1 \rangle \times \langle d\mid d^{p^{n}} = 1\rangle$, where $m,n \in \mathbb{N}$ are arbitrarily fixed, 
 take a subgroup~$G=\langle sc, s_1d\rangle$.
 In this group, the subgroup $N = C_{G} \left( G' / \Phi \left( G' \right)\right)$ is abelian and has index $p$ in $G$.

 The following lemma is a~consequence of Proposition~1.4~\cite{bagincaranti} and Lemma~1~\cite{bbaginski1988}.
 \begin{lem}
     \label{lem:baginski-caranti}
     If the subgroup $N$ of $G$ defined by equation $N = C_{G} \left( G' / \Phi \left( G' \right) \right)$ has index $p$ in~$G$, then the subgring $I \left( N \right) + I \left( G'\right)\mathbb{F}G$ is determined by the structure of $\mathbb{F}G$. 
     In particular the ideal $I \left(N \right) \mathbb{F}G$ of $\mathbb{F}G$ and the orders of factors $\mathcal{M}_{i}(N) / \mathcal{M}_{i+1}(N)$ of the Brauer-Jennings-Zassenhaus series of~$N$ are~determined.
 \end{lem}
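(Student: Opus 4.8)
This statement bundles three claims about the algebra $\mathbb{F}G$: first, that the subring $I(N)+I(G')\mathbb{F}G$ can be recovered from $\mathbb{F}G$ alone (equivalently, is carried to the analogous subring by every $\mathbb{F}$-algebra isomorphism); second, that the two-sided ideal $I(N)\mathbb{F}G$ is likewise recovered; and third, that the graded dimensions of the Brauer--Jennings--Zassenhaus filtration $\mathcal{M}_i$ of $\mathbb{F}N$ are invariants. The plan is to quote the two cited results for the genuinely new content and to derive the ``in particular'' assertions as formal consequences. Throughout I would use that $N$, having index $p$, is a maximal normal subgroup with $G/N$ cyclic of order $p$, and that $G'\le N$: indeed $[G',G']\le\Phi(G')$, so $G'$ acts trivially on $G'/\Phi(G')$ by conjugation and hence lies in its centralizer $N$.

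The commutator level is handled by Lemma~1 of~\cite{bbaginski1988}. The point I would first record is that $I(G')\mathbb{F}G$ is intrinsic to the ring structure: it equals the two-sided ideal generated by all additive commutators $uv-vu$. One inclusion follows from $gh-hg=hg([g,h]-1)$, placing every Lie commutator in $I(G')\mathbb{F}G$; the reverse follows because $G'$ is generated by group commutators and $c_1c_2-1=(c_1-1)+(c_2-1)+(c_1-1)(c_2-1)$ propagates membership. Consequently $I(G')\mathbb{F}G$, and the quotient $\mathbb{F}G/I(G')\mathbb{F}G\cong\mathbb{F}(G/G')$, are determined.

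The substantive step is to read off $N=C_G(G'/\Phi(G'))$ from the algebra, which is exactly Proposition~1.4 of~\cite{bagincaranti}. Conceptually, the elementary abelian section $G'/\Phi(G')$ appears as an algebra-determined layer inside $I(G')\mathbb{F}G$ modulo a higher term of its powers, and the conjugation action of $G$ on that section is induced by the inner action of $\mathbb{F}G$; the centralizer of this action is therefore algebra-intrinsic. Since $I(N)+I(G')\mathbb{F}G$ is precisely the preimage, under the projection $\mathbb{F}G\to\mathbb{F}(G/G')$, of the augmentation ideal of the subgroup $N/G'\le G/G'$ cut out by this centralizer, it too is determined. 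I expect this to be the main obstacle: one must convert a group-theoretic centralizer into a subring visible to every algebra isomorphism, and it is the index-$p$ hypothesis that makes $N/G'$ an unambiguous index-$p$ subgroup of the (determined) abelianization, forcing the matching of subrings.

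The remaining two assertions then follow mechanically. Since $G'\le N$ we have $I(G')\mathbb{F}G\subseteq I(N)\mathbb{F}G$, so the two-sided ideal generated by the determined subring $I(N)+I(G')\mathbb{F}G$ is exactly $I(N)\mathbb{F}G$, giving the second claim. For the third, let $\mathcal{M}_i=I(N)^i$ denote the powers of the augmentation ideal of $\mathbb{F}N$. Viewing $\mathbb{F}G$ as a free $\mathbb{F}N$-module of rank $[G:N]=p$ on a transversal yields $I(N)^i\mathbb{F}G=\bigoplus_j I(N)^ig_j$, whence $\dim_{\mathbb{F}}\big(I(N)^i\mathbb{F}G/I(N)^{i+1}\mathbb{F}G\big)=p\cdot\dim_{\mathbb{F}}\big(\mathcal{M}_i/\mathcal{M}_{i+1}\big)$. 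As all powers of the determined ideal $I(N)\mathbb{F}G$ are determined and $p$ is known, the graded dimensions $\dim_{\mathbb{F}}\mathcal{M}_i/\mathcal{M}_{i+1}$ are recovered; this is the use of Lemma~1 of~\cite{bbaginski1988}. I would present this last computation only in outline, as it is routine bookkeeping once the key subring of the previous paragraph is in hand.
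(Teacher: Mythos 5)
Your proposal is correct and takes essentially the same route as the paper: the paper gives no proof of this lemma beyond declaring it a consequence of Proposition~1.4 of \cite{bagincaranti} and Lemma~1 of \cite{bbaginski1988}, which are exactly the two results you invoke for the load-bearing steps. Your supplementary derivations --- that the ideal generated by the determined subring $I(N)+I(G')\mathbb{F}G$ is $I(N)\mathbb{F}G$ since $G'\le N$, and the free-$\mathbb{F}N$-module count $\dim_{\mathbb{F}}\bigl(I(N)^i\mathbb{F}G/I(N)^{i+1}\mathbb{F}G\bigr)=p\cdot\dim_{\mathbb{F}}\bigl(\mathcal{M}_i/\mathcal{M}_{i+1}\bigr)$ recovering the Brauer--Jennings--Zassenhaus factor dimensions --- are correct routine bookkeeping consistent with those citations.
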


\begin{prop}\label{prop}
    Let $G$ be a finite $2$-generated $p$-group in which the subgroup
$$N = C_{G} \left( G' / \Phi \left( G' \right) \right)$$ is abelian and has index $p$ then 
$N$ is determined by $\mathbb{F}G$.
\end{prop}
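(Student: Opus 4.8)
The aim is to prove that the isomorphism type of the group $N$ is an invariant of the $\mathbb{F}$-algebra $\mathbb{F}G$; that is, whenever $\mathbb{F}G\cong\mathbb{F}H$ one should be able to conclude $N_G\cong N_H$, where $N_G,N_H$ denote the corresponding centralizers in $G$ and $H$. The strategy is to start from the algebra-intrinsic data supplied by Lemma~\ref{lem:baginski-caranti} --- the canonical ideal $I(N)\mathbb{F}G$ and the sequence of dimensions $\dim_{\mathbb{F}}\mathcal{M}_i/\mathcal{M}_{i+1}$ of the Brauer--Jennings--Zassenhaus series of $\mathbb{F}N$ --- and to upgrade this numerical data to a statement about $N$ as an abstract group.

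First I would translate the BJZ dimensions into purely group-theoretic quantities attached to $N$. By Jennings' theorem the generating function of these dimensions factors as
\begin{equation*}
\sum_{n\geq 0}\dim_{\mathbb{F}}\bigl(\mathcal{M}_n/\mathcal{M}_{n+1}\bigr)t^n=\prod_{i\geq 1}\left(\frac{1-t^{ip}}{1-t^{i}}\right)^{d_i},\qquad d_i=\log_p\bigl|D_i(N)/D_{i+1}(N)\bigr|,
\end{equation*}
where $D_1(N)\supseteq D_2(N)\supseteq\cdots$ is the dimension subgroup series of $N$. This correspondence is invertible (one peels off $d_1,d_2,\dots$ from successive coefficients), so knowing the dimensions of the BJZ factors of $\mathbb{F}N$ is equivalent to knowing all the orders $|D_i(N)/D_{i+1}(N)|$. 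Hence these orders are themselves invariants of $\mathbb{F}G$.

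The decisive step is to pass from this dimension-subgroup data to the isomorphism type of $N$, and here I would use that $N$ is abelian --- which is precisely the case for the index-$p$ centralizer occurring in the construction preceding the proposition. For an abelian $p$-group Jennings' formula collapses to $D_n(N)=N^{p^{\lceil\log_p n\rceil}}$, so the only nonzero $d_i$ occur at $i=p^{k}$ and satisfy $d_{p^{k}}=\log_p|N^{p^{k}}/N^{p^{k+1}}|$. The numbers $|N^{p^{k}}/N^{p^{k+1}}|$ count, for each $k$, the cyclic direct factors of $N$ of order greater than $p^{k}$, and therefore determine the invariant factors of $N$, i.e.\ its isomorphism type. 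Combining this with the previous step recovers the isomorphism type of $N$ from $\mathbb{F}G$, which is the assertion.

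I expect the main obstacle to be exactly this last passage, since for a general non-abelian $p$-group the BJZ dimensions are very far from determining the group --- this gap is the crux of the whole Modular Isomorphism Problem. The argument therefore hinges on $N$ being abelian, which is what holds for $N=C_G(G'/\Phi(G'))$ of index $p$ in the situation at hand, and which is what blocks a counterexample for $p>2$: once $N$ is abelian and determined, any two groups $G,H$ produced by the analogous construction with $\mathbb{F}G\cong\mathbb{F}H$ must already have isomorphic index-$p$ subgroups $N$, in contrast to the $p=2$ examples of Lemma~\ref{lem:group-structure}, where the abelian index-$2$ subgroups have different exponents. A secondary point to record is that the recipe of Lemma~\ref{lem:baginski-caranti} producing $I(N)\mathbb{F}G$ and the BJZ filtration of $\mathbb{F}N$ is genuinely intrinsic to the algebra, so that these invariants may be transported across any algebra isomorphism; this is, however, already contained in the cited lemma.
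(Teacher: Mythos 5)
Your proposal has the right skeleton --- invoke Lemma~\ref{lem:baginski-caranti} to make the Brauer--Jennings--Zassenhaus dimensions of $\mathbb{F}N$ an invariant of the algebra, then use the fact that for an \emph{abelian} $p$-group these dimensions (via Jennings' formula $D_n(N)=N^{p^{\lceil\log_p n\rceil}}$) recover the invariant factors --- and this second half is in fact a welcome fleshing-out of the paper's terse final sentence ``the mentioned dimensions determine the isomorphism class of $N$.'' But there is a genuine gap at exactly the point you flagged and then sidestepped. The proposition asserts that $N$ is determined by $\mathbb{F}G$, i.e.\ that for \emph{every} $p$-group $H$ with $\mathbb{F}G\cong\mathbb{F}H$ the subgroup $M=C_H(H'/\Phi(H'))$ satisfies $M\cong N$. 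Your argument only delivers this when $M$ is already known to be abelian: equality of the BJZ dimension sequences of $\mathbb{F}N$ and $\mathbb{F}M$ forces $M\cong N$ only once both groups lie in a class (here, abelian groups) in which these dimensions are complete invariants; a priori a nonabelian $M$ could share the dimension data of the abelian $N$, since --- as you yourself note --- Jennings dimensions are far from determining a general $p$-group. You resolve this by restricting to the case where ``both groups are produced by the analogous construction,'' but that is a weaker statement than the proposition and would not support the paper's concluding claim that no similar counterexample exists for odd $p$, which must rule out an \emph{arbitrary} partner $H$.

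The paper closes precisely this hole with an extra step your proposal lacks: after noting that $H$ is $2$-generated and that $M$ has index $p$, it proves that $M$ is abelian by comparing the dimensions of the centers of $\mathbb{F}G$ and $\mathbb{F}H$ (the number of conjugacy classes being an algebra invariant), adapting the proof of Theorem~3.2 of \cite{bagincaranti}; only then does it apply Lemma~\ref{lem:baginski-caranti} to equate the BJZ dimensions and conclude $N\cong M$. To repair your proof along your own lines you would need either this center-dimension argument or a substitute lemma showing that the BJZ dimension sequence of an abelian $p$-group cannot be matched by a nonabelian one (for odd $p$) --- neither of which is supplied in your write-up. A secondary, more minor omission, shared with the paper's own terseness: neither you nor the paper spells out why $M=C_H(H'/\Phi(H'))$ again has index $p$ in $H$, which is needed before Lemma~\ref{lem:baginski-caranti} can be applied on the $H$-side.
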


\begin{proof}
Let $H$ be a $p$-group such that $\mathbb{F}G\cong\mathbb{F}H$, then $H$ is~also $2$-generated and contains 
a subgroup $M = C_{H} \left( H' / \Phi\left( H'\right)\right)$ of index $p$.  
Suppose first that $M$ is abelian. It is well-known, see the proof~of~\cite[Lemma 14.2.7]{passman1977}, that
the series $\left\{ |\mathcal{M}_{i} (M) /\mathcal{M}_{i+1} (M)|, i \geq 1 \right\}$ determines 
the ismorphism class of $M$. Since 
$|\mathcal{M}_{i} (M) /\mathcal{M}_{i+1} (M)| = |\mathcal{M}_{i} (N) /\mathcal{M}_{i+1} (N)|,\ i\geq 1,$
 by~Lemma~\ref{lem:baginski-caranti}, we obtain $M \cong N$.
Therefore to complete the proof we need to show that $M$ is abelian.

It is true, if we assume additionally that $G$ is a $p$-group of maximal class, by 
\cite[Theorem 3.2]{bagincaranti}. We employ the proof of this theorem in the context of our case.

The set $\Phi(N)-Z(N)$ consists of all elements $g\in G$ that are the $p$-th powers of the non-central elements of $G$. 
Since $N$ is abelian and $|G:N|=p$, the $G$-class sums $\widehat{C_g}$ of these elements are the 
only $p$-th powers of other $G$-class sums in $\mathbb{F}G.$ It follows from \cite{parmenter1981} 
that the number of such class sums is determined by $\mathbb{F}G$, so the number of $H$-class sums 
$\widehat{C_h}$, $h\in H$, which are $p$-th powers is the same in $\mathbb{F}H$. Since 
$N/\Phi(G')\cong M/\Phi(H')$ this property have exactly the class sums $\widehat{C_h}$, if 
$h\in \Phi(M)-Z(H)$. The number of $G$-class sums which are $p$-th powers is as large as possible, 
because all they have cardinality $p$. Therefore all $H$-class sums of noncentral elements 
of $\Phi(M)$ have cardinality $p$, because $|\Phi(N)-Z(G)|=|\Phi(M)-Z(H)|$.

Let $H=\langle h,y\rangle$, where $y\in M$. If $\Phi(H)=1$, then $M$ is abelian, so assume that 
$\Phi(M)\neq 1$. Now it is clear that $y^p\notin \Phi(H')$, which implies that $y^p$ is not central, 
because $|H'\cap Z(H)|=p$ and then $H'\cap Z(H)\leqslant\Phi(H')$. Thus $\widehat{C_{y^p}}=
\widehat{C_y}^p$ and then $H:C_H(y)|=|H:C_H(y^p)|=p$, by \cite{sehgal1967}. Therefore $C_H(y)=M$, 
or equivalently $y\in Z(M)$. Now, $M$ is generated by $Z(H)$ and all conjugates of $y$, which are 
also contained in $Z(M)$. Hence $M$ is abelian. This ends the proof. 

\end{proof}

In the counterexample the subgroup $C_{G} \left( G' / \Phi\left( G'\right)\right) = G$ hence Proposition~\ref{prop} can not be applied for $p=2$.
Simultanously, Proposition~\ref{prop} shows that there are no similiar counterexamples for odd primes.

\begin{bibdiv}
\DefineJournal{jrmaa8}{0075-4102}
{J. Reine Angew. Math.}
{Journal für die reine und angewandte Mathematik (Crelles Journal)}

\DefineJournal{jahresber}{0075-4102}
{Jahresber. Dtsch. Math.-Ver.}
{Jahresbericht der Deutschen Mathematiker-Vereinigung}

\DefineJournal{canmath}{0075-4102}
{Can. J. Math.}
{Canadian Journal of Mathematics}

\DefineJournal{mathz}{0075-4102}
{Math. Z.}
{Mathematische Zeitschrift}

\DefineJournal{bolsac}{0075-4102}
{Bol. Soc. Bras. Mat.}
{Boletim da Sociedade Brasileira de Matemática}

\begin{biblist}

\bib{bagincaranti}{article}{
    title = {The Modular Group Algebras of $p$-Groups of Maximal Class},
    volume = {40},
    number = {6},
    journal = {canmath},
    author = {Bagiński, Czesław},
    author = {Caranti, Andrea},
    year = {1988},
    pages = {1422–1435}
}

\bib{bbaginski1988}{article}{
    title = {The isomorphism question for modular group algebras of metacyclic $p$-groups},
    author = {C. Bagiński},
    pages = {39-42},
    volume = {104},
    number = {1},
    journal = {Proc. Amer. Math. Soc.},
    year = {1988}
}

\bib{blackburn}{article}{
  title = {On a special class of p-groups},
  volume = {100},
  ISSN = {0001-5962},
  url = {http://dx.doi.org/10.1007/BF02559602},
  number = {1–2},
  journal = {Acta Mathematica},
  publisher = {International Press of Boston},
  author = {Blackburn,  N.},
  year = {1958},
  pages = {45–92}
}

\bib{gdelrma}{article}{
    title = {Non-isomorphic 2-groups with isomorphic modular group algebras},
    author = {García-Lucas, Diego},
    author = {Margolis, Leo},
    author = {del Río, Angel},
    pages = {269--274},
    volume = {2022},
    number = {783},
    journal = {jrmaa8},
    year = {2022}
}

\bib{sehgal1967}{article}{,
  title = {On the isomorphism of group algebras},
  volume = {95},
  ISSN = {1432-1823},
  number = {1},
  journal = {mathz},
  publisher = {Springer Science and Business Media LLC},
  author = {Sehgal,  Sudarshan K.},
  year = {1967},
  pages = {71–75}
}

\bib{parmenter1981}{article}{,
  title = {A note on isomorphic group rings},
  volume = {12},
  ISSN = {1678-7714},
  number = {2},
  journal = {bolsac},
  publisher = {Springer Science and Business Media LLC},
  author = {{M. M. Parmenter} and {C. Polcino Milies}},
  year = {1981},
  pages = {57–59}
}

\bib{passman1977}{book}{
  title     = {The algebraic structure of group rings},
  author    = {Passman, Donald S.},
  publisher = {John Wiley \& Sons},
  series    = {Pure \& Applied Mathematics},
  year      = {1978},
  address   = {Nashville, TN},
}

\bib{margolis2022}{article}{
  title = {The Modular Isomorphism Problem: A Survey},
  volume = {124},
  ISSN = {1869-7135},
  number = {3},
  journal = {jahresber},
  publisher = {Springer Fachmedien Wiesbaden GmbH},
  author = {Margolis, Leo},
  year = {2022},
  pages = {157–196}
}
\bibitem[GAP]{GAP4}
The GAP~Group, \emph{GAP -- Groups, Algorithms, and Programming,
Version 4.12.2};
2022,
\url{https://www.gap-system.org}.

\end{biblist}
\end{bibdiv}

\end{document}